\title{Descendants of algebraic curves admitting two Galois points} 
\author{Satoru Fukasawa}
\subjclass[2020]{14H50, 14H05, 14H37}
\keywords{Galois point, plane curve, Galois group, automorphism group}
\address{Faculty of Science, Yamagata University, Kojirakawa-machi 1-4-12, Yamagata 990-8560, Japan} 
\email{s.fukasawa@sci.kj.yamagata-u.ac.jp} 
\thanks{The author was partially supported by JSPS KAKENHI Grant Number JP22K03223.}
\newtheorem{theorem}{Theorem}
\newtheorem{proposition}{Proposition}
\newtheorem{fact}{Fact}
\theoremstyle{definition}
\newtheorem{remark}{Remark}
\newtheorem{definition}{Definition} 
\newtheorem{problem}{Problem} 
\begin{document}
\begin{abstract} 
A connection between Galois points of an algebraic curve and those of a quotient curve is presented; in particular, the notion of a descendant of algebraic curves admitting two Galois points is introduced. 
It is shown that all descendants of a Fermat curve are Fermat curves; in particular, a Fermat curve does not have a descendant if and only if the degree is a prime.    
\end{abstract}

\maketitle 

\section{Introduction} 
The notion of a {\it Galois point} for a plane curve was introduced by Hisao Yoshihara in 1996. 
There are many interesting studies (see \cite{fukasawa1, miura-yoshihara, yoshihara, open}). 
The Galois group at a Galois point $P$ is denoted by $G_P$. 
In \cite{fukasawa2}, the author presented a criterion for the existence of a plane model with two Galois points of a smooth projective curve $X$, as follows.

\begin{fact} \label{criterion-outer} 
Let $G_1, G_2$ be different finite subgroups of ${\rm Aut}(X)$ and let $Q \in X$.
Then the following conditions are equivalent. 
\begin{itemize} 
\item[(I)] The following three conditions are satisfied: 
\begin{itemize}
\item[(a)] $X/{G_1} \cong \Bbb P^1$, $X/{G_2} \cong \Bbb P^1$,    
\item[(b)] $G_1 \cap G_2=\{1\}$, and
\item[(c)] $\sum_{\sigma \in G_1} \sigma (Q)=\sum_{\tau \in G_2} \tau (Q)$. 
\end{itemize}  
\item[(II)] There exists a birational embedding $\varphi: X \rightarrow \mathbb P^2$ of degree $|G_1|$ and different outer Galois points $P_1, P_2 \in \mathbb{P}^2 \setminus \varphi(X)$ exist for $\varphi(X)$ such that $G_{P_i}=G_i$ for $i=1, 2$ and points $\varphi(Q)$, $P_1$ and $P_2$ are collinear. 
\end{itemize} 
\end{fact}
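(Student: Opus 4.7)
The plan is to prove the two implications separately, with the forward direction following from direct properties of projections and the reverse direction from an explicit birational embedding. For (II)$\Rightarrow$(I), the projection $\pi_{P_i}$ from the outer Galois point $P_i$ is Galois with group $G_i$, so its composition with $\varphi$ is a Galois cover $X \to \Bbb P^1$ of degree $|G_i|$, giving $X/G_i \cong \Bbb P^1$ and (a). For (b), any $\sigma \in G_1 \cap G_2$ preserves both pencils of fibers; at a generic $R \in X$, the two lines joining $\varphi(R)$ to $P_1$ and to $P_2$ are distinct and meet only at $\varphi(R)$, so $\varphi(\sigma(R)) = \varphi(R)$, hence $\sigma = 1$. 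For (c), let $\ell$ be the line through the three collinear points $P_1, P_2, \varphi(Q)$; since $\ell$ passes through $P_1$, its pullback $\varphi^*\ell$ equals the fiber of $\pi_{P_1}\circ\varphi$ over $\pi_{P_1}(\varphi(Q))$, which is $\sum_{\sigma \in G_1} \sigma(Q)$, and the same argument with $P_2$ gives $\varphi^*\ell = \sum_{\tau \in G_2} \tau(Q)$.

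For (I)$\Rightarrow$(II), I would take generators $x$ of $k(X)^{G_1}$ and $y$ of $k(X)^{G_2}$, and after fractional linear adjustments arrange $a := x(Q)$ and $b := y(Q)$ to be finite. Set $\varphi := (1/(x-a): 1/(y-b):1)$, $P_1 := (0:1:0)$, and $P_2 := (1:0:0)$. Condition (c) gives $(x-a)_0 = (y-b)_0$ as divisors, so the functions $u := 1/(x-a)$ and $v := 1/(y-b)$ share the common pole divisor $D := \sum_{\sigma \in G_1} \sigma(Q)$. A generic linear combination $\alpha u + \beta v + \gamma$ has pole divisor exactly $D$, so its zero divisor has degree $|G_1|$, giving $\deg \varphi(X) = |G_1|$. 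The composition $\pi_{P_1}\circ\varphi$ equals $u$, a generator of $k(X)^{G_1}$, so the projection from $P_1$ is Galois with group $G_1$; similarly for $P_2$. Comparing the projection degree from $P_i$ to $\deg \varphi(X)$ forces $P_i \notin \varphi(X)$. Birationality follows from $k(u, v) = k(x, y) = k(X)$, where the last equality uses (b): the intermediate field $k(x, y) = k(X)^H$ for some $H$ contained in both $G_1$ and $G_2$, hence in $G_1 \cap G_2 = \{1\}$.

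The main subtlety is verifying that $\varphi$ extends regularly at $Q$ with image on $\{X_2 = 0\}$. The functions $u$ and $v$ both have poles at $Q$, and the divisor equality in (c) forces the multiplicities of $Q$ in $\sum_\sigma \sigma(Q)$ and $\sum_\tau \tau(Q)$ to agree, i.e., $|\mathrm{Stab}_{G_1}(Q)| = |\mathrm{Stab}_{G_2}(Q)| =: e$, so $u$ and $v$ have poles of the same order $e$ at $Q$. Multiplying the triple $(u:v:1)$ by $t^e$ for a local uniformizer $t$ at $Q$ yields nonzero finite values in the first two entries and a zero in the third, so $\varphi(Q) \in \{X_2 = 0\}$ and $\varphi(Q), P_1, P_2$ are collinear. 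This matching of pole orders is the key nontrivial point that lets the embedding be normalized so that all three points lie on the common line $\{X_2 = 0\}$.
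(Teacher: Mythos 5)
The paper does not prove this statement: it is imported as Fact~\ref{criterion-outer} from \cite{fukasawa2} without proof, so there is no internal argument to compare against. Your proposal is correct and follows the standard construction of that reference — taking generators $x, y$ of the two rational fixed fields, normalizing so that $1/(x-a)$ and $1/(y-b)$ share the pole divisor $\sum_{\sigma\in G_1}\sigma(Q)=\sum_{\tau\in G_2}\tau(Q)$, and reading off the degree, the Galois projections, birationality via $G_1\cap G_2=\{1\}$, and the collinearity from the matching pole orders at $Q$ — with the converse handled by the usual pencil-of-lines argument.
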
 

In \cite{fukasawa-higashine}, the author and Higashine generalized this theorem to $4$-tuples $(G_1, G_2, Q, H)$ with $H \vartriangleleft G_1$ and $H \vartriangleleft G_2$, and presented a necessary and sufficient condition for the existence of two Galois points of the quotient curve $X/H$. 
For the case where $X$ admits two Galois points, a sufficient condition for the existence of two Galois points of the quotient curve $X/H$ was given. 
As a subsequent study, this paper presents a connection between Galois points of $X$ and those of $X/H$; in particular, this paper introduces the notion of a descendant. 

For a finite subgroup $H$ of ${\rm Aut}(X)$, the quotient map is denoted by $f_H: X \rightarrow X/H$. 
For a point $Q \in X$, the image $f_H(Q)$ is denoted by $\overline{Q}$. 
Assume that $H$ is a normal subgroup of a finite subgroup $G \subset {\rm Aut}(X)$. 
It follows that $\sigma^*(k(X)^H)=k(X)^H$ for each $\sigma \in G$ with the pullback $\sigma^*: k(X) \rightarrow k(X)$.    
Therefore, there exists a natural homomorphism $G \rightarrow {\rm Aut}(X/H)$; $\sigma \mapsto \overline{\sigma}$, where $\overline{\sigma}$ corresponds to the restriction $\sigma^*|_{k(X)^H}$.  
The image is denoted by $\overline{G}$, which is isomorphic to $G/H$. 
As a special case of \cite[Theorem 2]{fukasawa-higashine} and a generalization of \cite[Corollary 2]{fukasawa-higashine}, the following holds. 

\begin{proposition} \label{main-outer} 
Let $H, G_1, G_2 \subset {\rm Aut}(X)$ be finite subgroups with $G_1 \ne G_2$ and let $Q \in X$. 
Assume that conditions (a), (b) and (c) of Fact \ref{criterion-outer} are satisfied for the triple $(G_1, G_2, Q)$, and that 
\begin{itemize}
\item[(d)] $G_i \not\subset H$, the set $G_i H$ is a group, and $H \vartriangleleft G_i H$ for $i=1, 2$. 
\end{itemize}
Then the following conditions are equivalent. 
\begin{itemize}
\item[(I)] The following two conditions are satisfied:  
\begin{itemize}
\item[(e)] $G_1H \cap G_2H=H$, and 
\item[(f)] $\sum_{\sigma \in G_1H} \sigma (Q)=\sum_{\tau \in G_2H} \tau (Q)$. 
\end{itemize}  
\item[(II)] There exists a birational embedding $\varphi: X/H \rightarrow \mathbb P^2$ of degree $|G_1H/H|$ and different outer Galois points $P_1, P_2 \in \mathbb{P}^2 \setminus \varphi(X/H)$ exist for $\varphi(X/H)$ such that $G_{P_i}=\overline{G_iH}$ for $i=1, 2$ and points $\varphi(\overline{Q})$, $P_1$ and $P_2$ are collinear. 
\end{itemize}
\end{proposition}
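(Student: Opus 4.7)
The plan is to reduce to Fact~\ref{criterion-outer} applied on the quotient curve $X/H$ with the triple $(\overline{G_1H}, \overline{G_2H}, \overline{Q})$. Condition (II) of the proposition is, by construction, condition (II) of Fact~\ref{criterion-outer} for this triple, with $|\overline{G_1H}| = |G_1H/H|$ giving the correct degree. So the task is to show that, under hypotheses (a)--(d), the pair (e), (f) is equivalent to the analogues of (a), (b), (c) from Fact~\ref{criterion-outer} for $(\overline{G_1H}, \overline{G_2H}, \overline{Q})$ on $X/H$.

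First, the analogue of (a) on $X/H$ is automatic: since $\overline{G_iH} \cong G_iH/H$ and $H \vartriangleleft G_iH$ by (d), one has $(X/H)/\overline{G_iH} \cong X/(G_iH)$, and the latter is a quotient of $X/G_i \cong \Bbb P^1$, hence equals $\Bbb P^1$. Next, the natural map $G_iH \to {\rm Aut}(X/H)$ has kernel exactly $H$ (an element acting trivially on $k(X)^H$ together with $H$ generates a subgroup whose fixed field is still $k(X)^H$, so by the Galois correspondence it lies in $H$). Hence $\overline{G_1H} \cap \overline{G_2H} = (G_1H \cap G_2H)/H$, so the analogue of (b) on $X/H$ is exactly (e); the condition $G_i \not\subset H$ from (d) together with (e) also forces $\overline{G_1H} \ne \overline{G_2H}$, which Fact~\ref{criterion-outer} requires.

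For the analogue of (c), each divisor $D_i := \sum_{g \in G_iH} g(Q)$ is $H$-invariant, since $G_iH$ is a union of $H$-cosets on either side. A direct calculation using $f_H(g(Q)) = \overline{g}(\overline{Q})$ gives $(f_H)_* D_i = |H|\sum_{\sigma \in \overline{G_iH}} \sigma(\overline{Q})$, and the identity $f_H^*(f_H)_* D = |H|\,D$ for $H$-invariant $D$ yields the converse, so (f) is equivalent to $\sum_\sigma \sigma(\overline{Q}) = \sum_\tau \tau(\overline{Q})$ on $X/H$. Combining all three equivalences, the proposition follows at once from Fact~\ref{criterion-outer}.

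The step I expect to require the most care is the last one: verifying $f_H^*(f_H)_* = |H|\cdot\mathrm{id}$ on $H$-invariant divisors requires orbit--stabilizer bookkeeping at the ramification points of $f_H$ in the supports of $D_1$ and $D_2$, where the stabilizer of $Q$ in $H$ may be nontrivial. The formula holds uniformly, but this is the place where one must resist cutting corners.
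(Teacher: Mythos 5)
Your proof is correct and follows essentially the same route as the paper's: both directions reduce to Fact \ref{criterion-outer} applied to the triple $(\overline{G_1H},\overline{G_2H},\overline{Q})$ on $X/H$, with condition (a) handled via rationality of the subfield $k(X)^{G_iH}\subset k(X)^{G_i}$ and the divisor condition transferred via $(f_H)_*\sum_{\sigma\in G_iH}\sigma(Q)=|H|\sum_{\overline{\sigma}}\overline{\sigma}(\overline{Q})$ and $f_H^*(\overline{Q})=\sum_{h\in H}h(Q)$. The only cosmetic differences are that you package the pull-back step as $f_H^*(f_H)_*=|H|\cdot\mathrm{id}$ on $H$-invariant divisors, and that you make explicit two points the paper leaves implicit (the kernel of $G_iH\to{\rm Aut}(X/H)$ being exactly $H$, and $\overline{G_1H}\ne\overline{G_2H}$).
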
 

With this proposition being used, the notions of a descendant, a maximality and a minimality are introduced, as follows.  

\begin{definition} 
Let $G_1, G_2 \subset {\rm Aut}(X)$ be finite subgroups with $G_1 \ne G_2$ and let $Q \in X$. 
Assume that conditions (a), (b) and (c) of Fact \ref{criterion-outer} are satisfied for the triple $(G_1, G_2, Q)$. 
\begin{itemize}
\item[(1)] If there exists a finite subgroup $H \subset {\rm Aut}(X)$ such that conditions (d), (e) and (f) of Proposition \ref{main-outer} are satisfied for the $4$-tuple $(G_1, G_2, Q, H)$, then $X/H$ (resp. $X$) is called a {\it descendant} of $X$ with respect to $(G_1, G_2, Q)$ (resp. an {\it ancestor} of $X/H$ with respect to $(\overline{G_1H}, \overline{G_2H}, \overline{Q})$). 
\item[(2)] $X$ is said to be {\it maximal} with respect to $(G_1, G_2, Q)$, if there does not exist an ancestor of $X$ with respect to $(G_1, G_2, Q)$.  
\item[(3)] $X$ is said to be {\it minimal} with respect to $(G_1, G_2, Q)$, if there does not exist a descendant with respect to $(G_1, G_2, Q)$.  
\end{itemize} 
\end{definition}

In Section 3, all descendants of several curves admitting two Galois points are determined. 

\section{Proof of Proposition \ref{main-outer}} 

The proof of Proposition \ref{main-outer} is similar to \cite[Theorem 2]{fukasawa-higashine}; however, we prove in detail for the convenience of the readers. 

\begin{proof}[Proof of Proposition \ref{main-outer}]
We consider the only-if part. 
Assume that conditions (a), (b) and (c) of Fact \ref{criterion-outer} and conditions (d), (e) and (f) of Proposition \ref{main-outer} are satisfied for the $4$-tuple $(G_1, G_2, Q, H)$. 
We would like to prove that conditions (a), (b) and (c) of Fact \ref{criterion-outer} are satisfied for the triple $(\overline{G_1H}, \overline{G_2H}, \overline{Q})$. 
Since $k(X/H)^{\overline{G_iH}}=k(X)^{G_iH} \subset k(X)^{G_i}$, by condition (a) and L\"{u}roth's theorem, the fixed field $k(X/H)^{\overline{G_iH}}$ is rational. 
It follows from condition (e) that $\overline{G_1H}\cap\overline{G_2H}=\{1\}$. 
Since
$$ \sum_{\sigma \in G_1H}\sigma(Q)=\sum_{H\sigma \in G_1H/H}\sum_{h \in H}h\sigma(Q), $$
it follows that
$$ (f_H)_*\left(\sum_{\sigma \in G_1H}\sigma(Q)\right)=\sum_{H\sigma \in G_1H/H}|H|\cdot\overline{\sigma(Q)}=|H|\sum_{\overline{\sigma} \in \overline{G_1H}}\overline{\sigma}(\overline{Q}). $$
It follows from condition (f) that 
$$|H|\left(\sum_{\overline{\sigma} \in \overline{G_1H}}\overline{\sigma}(\overline{Q})\right)=|H|\left(\sum_{\overline{\tau} \in \overline{G_2H}}\overline{\tau}(\overline{Q})\right). $$
Since $|H|\cdot D=0$ implies $D=0$ for any divisor $D$, we are able to cut the multiplier $|H|$. 
Condition (c) for the triple $(\overline{G_1H}, \overline{G_2H}, \overline{Q})$ is satisfied.

We consider the if part. 
By Fact \ref{criterion-outer}, we have that conditions (a), (b) and (c) of Fact \ref{criterion-outer} are satisfied for the triple $(\overline{G_1H}, \overline{G_2H}, \overline{Q})$.   
Since $\overline{G_1H} \cap \overline{G_2H}=\{1\}$, condition (e) is satisfied. 
By condition (c) for $X/H$, 
$$\sum_{\overline{\sigma} \in \overline{G_1H}}\overline{\sigma}(\overline{Q})=\sum_{\overline{\tau} \in \overline{G_2H}}\overline{\tau}(\overline{Q}). $$
Since $f_H^*(\overline{Q})=\sum_{h \in H}h(Q)$ for each $Q \in X$ (see, for example, \cite[III.7.1, III.7.2, III.8.2]{stichtenoth}), 
\begin{eqnarray*} 
f_H^*\left(\sum_{\overline{\sigma} \in \overline{G_1H}}\overline{\sigma}(\overline{Q})\right)&=&\sum_{\overline{\sigma} \in \overline{G_1H}}f_H^*(\overline{\sigma}(\overline{Q})) =\sum_{H\sigma \in G_1H/H}\sum_{h \in H}h\sigma(Q) \\
&=&\sum_{\sigma \in G_1H}\sigma(Q). 
\end{eqnarray*}
Similarly, 
$$ f_H^*\left(\sum_{\overline{\tau} \in \overline{G_2H}}\overline{\tau}(\overline{Q})\right)=\sum_{\tau \in G_2H}\tau(Q).$$
Condition (f) is satisfied. 
\end{proof} 

The following holds for Galois points which are smooth points. 

\begin{proposition} \label{main-inner} 
Let $H, G_1, G_2 \subset {\rm Aut}(X)$ be finite subgroups with $G_1 \ne G_2$ and let $P_1, P_2 \in X$. 
Assume that the following conditions are satisfied for the $5$-tuple $(G_1, G_2, H, P_1, P_2)$:  
\begin{itemize} 
\item[(a)] $X/{G_1} \cong \Bbb P^1$, $X/{G_2} \cong \Bbb P^1$,    
\item[(b)] $G_1 \cap G_2=\{1\}$, 
\item[(c)] $P_1+\sum_{\sigma \in G_1} \sigma (P_2)=P_2+\sum_{\tau \in G_2} \tau (P_1)$, and
\item[(d)] $G_i \not\subset H$, the set $G_i H$ is a group, and $H \vartriangleleft G_i H$ for $i=1, 2$. 
\end{itemize}
Then the following conditions are equivalent. 
\begin{itemize}
\item[(I)] The following three conditions are satisfied:  
\begin{itemize}
\item[(e)] $G_1H \cap G_2H=H$, 
\item[(f)] $\sum_{h \in H}h(P_1)+\sum_{\sigma \in G_1H} \sigma (P_2)=\sum_{h \in H}h(P_2)+\sum_{\tau \in G_2H} \tau (P_1)$, and 
\item[(g)] $H \cdot P_1 \ne H \cdot P_2$.  
\end{itemize}  
\item[(II)] There exists a birational embedding $\varphi: X/H \rightarrow \mathbb P^2$ of degree $|G_1H/H|+1$ such that points $\varphi(\overline{P_1}), \varphi(\overline{P_2}) \in \varphi(X/H)$ are different Galois points and $G_{\varphi(\overline{P_i})}=\overline{G_iH}$ for $i=1, 2$. 
\end{itemize}
\end{proposition}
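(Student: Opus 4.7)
The plan is to mirror the proof of Proposition \ref{main-outer}, replacing Fact \ref{criterion-outer} with its inner-Galois-points analogue (available in the Galois point literature, e.g.\ in the vein of \cite{fukasawa2}): conditions (a), (b), (c) of Proposition \ref{main-inner} together with $P_1 \ne P_2$ are equivalent to the existence of a birational embedding of degree $|G_1|+1$ whose image has two distinct inner Galois points $P_1, P_2$ with $G_{P_i}=G_i$. Once this inner criterion is in hand, the argument proceeds on $X/H$ in place of $X$, checking that each of its hypotheses is preserved by passage to the quotient.

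For the only-if direction, I would assume (d), (e), (f), (g) and verify (a), (b), (c) of the inner criterion on $X/H$ for the $4$-tuple $(\overline{G_1H},\overline{G_2H},\overline{P_1},\overline{P_2})$. Rationality of $k(X/H)^{\overline{G_iH}}=k(X)^{G_iH}\subset k(X)^{G_i}$ follows from (a) and L\"uroth's theorem exactly as in Proposition \ref{main-outer}, and (b) on the quotient is equivalent to (e). For the divisor identity
\[ \overline{P_1}+\sum_{\overline{\sigma}\in\overline{G_1H}}\overline{\sigma}(\overline{P_2})=\overline{P_2}+\sum_{\overline{\tau}\in\overline{G_2H}}\overline{\tau}(\overline{P_1}), \]
I push (f) forward along $f_H$: since $h(P_i)$ maps to $\overline{P_i}$ for every $h\in H$ and
\[ (f_H)_*\Bigl(\sum_{\sigma\in G_iH}\sigma(P_j)\Bigr)=|H|\sum_{\overline{\sigma}\in\overline{G_iH}}\overline{\sigma}(\overline{P_j}), \]
applying $(f_H)_*$ to (f) yields $|H|$ times the desired equality and the multiplier cancels as before. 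Condition (g) is exactly $\overline{P_1}\ne\overline{P_2}$, which is what the inner criterion requires to produce two \emph{distinct} Galois points on the quotient.

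For the if direction, applying the inner criterion to $\varphi\colon X/H\to\mathbb{P}^2$ hands back (a), (b), (c) on $X/H$ together with $\overline{P_1}\ne\overline{P_2}$. The last inequality is (g), and (b) on the quotient, combined with the isomorphism $G_iH/H\cong\overline{G_iH}$, yields (e). To recover (f) I pull the quotient version of (c) back along $f_H^*$, using $f_H^*(\overline{Q})=\sum_{h\in H}h(Q)$ and the identity
\[ f_H^*\Bigl(\sum_{\overline{\sigma}\in\overline{G_iH}}\overline{\sigma}(\overline{P_j})\Bigr)=\sum_{\sigma\in G_iH}\sigma(P_j) \]
that already appears in the proof of Proposition \ref{main-outer}; the pulled-back $\overline{P_i}$ term contributes precisely $\sum_{h\in H}h(P_i)$, matching the ``extra'' summand in (f).

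The only real subtlety compared with Proposition \ref{main-outer} is the bookkeeping of the single ``$P_i$'' summand that appears on each side of (c) and (f) because the Galois points now lie on the curve. These terms behave consistently under $(f_H)_*$ (scaling by $|H|$) and under $f_H^*$ (resolving into an $H$-orbit), so the same divisor cancellation trick goes through. Condition (g) is imposed precisely to guarantee that the two corresponding points $\overline{P_1},\overline{P_2}$ on the quotient remain distinct, so that the hypothesis $P_1\ne P_2$ of the inner criterion is available in both directions of the equivalence.
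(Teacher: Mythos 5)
Your proof is correct and follows essentially the same route as the paper: the paper gives no separate proof of this proposition (it proves only the outer case, Proposition \ref{main-outer}, in detail), and your argument is exactly the intended transcription of that proof to the inner setting, invoking the two-inner-Galois-points criterion of \cite{fukasawa2} and correctly accounting for the extra $P_i$ summands under $(f_H)_*$ and $f_H^*$ as well as the role of condition (g) in keeping $\overline{P_1}\ne\overline{P_2}$.
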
 

The notion of a descendant with respect to a $4$-tuple $(G_1, G_2, P_1, P_2)$ is defined similarly. 

\begin{remark}
Let $G_1, G_2, H$ be finite subgroups of ${\rm Aut}(X)$ with $G_1 \ne G_2$, and let $P_1, P_2 \in X$. 
Conditions (a), (b), (c), (d), (e), (f) and (g) of Proposition \ref{main-inner} are satisfied for the $5$-tuple $(G_1, G_2, H, P_1, P_2)$ with conditions (a), (b), (c), (d), (e) and (f) of \cite[Corollary 1]{fukasawa-higashine}.  
Therefore, curves admitting two Galois points obtained from \cite[Corollary 1]{fukasawa-higashine} are descendants. 
In particular, according to \cite[Section 5]{fukasawa-higashine}, the curve $x^3+y^4+1=0$ has a descendant $y^2+x^3+1=0$, namely, this curve is not minimal.  
\end{remark}

\section{Determination of all descendants of several curves admitting two Galois points} 
The characteristic of the base field is denoted by $p$.  
In characteristic $p=0$, all known examples of algebraic curves admitting two (outer) Galois points are the following (see \cite{fukasawa-speziali, miura-yoshihara, yoshihara}): 
\begin{itemize}
\item[(1)] Fermat curve $F_d$: $X^d+Y^d+Z^d=0$; 
\item[(2)] Takahashi's curve $T_m$: (the smooth model of) $X^{2m}+X^mZ^m+Y^{2m}=0$; 
\item[(3)] Another embedding of $T_m$.  
\end{itemize}

For an integer $l$, $\zeta_l$ denotes a primitive $l$-th root of unity. 
When we consider the Fermat curve $F_d$, we assume that $p=0$, or $d$ is not divisible by $p$ and $d-1$ is not a power of $p$. 
It is known that for the Fermat curve $F_d$, points $P_1=(1:0:0), P_2=(0:1:0) \in \mathbb{P}^2 \setminus F_d$ are Galois points (\cite{miura-yoshihara, yoshihara}). 
The Galois group at $P_i$ is denoted by $G_i^d$ for $i=1, 2$. 
Let $Q^d \in F_d \cap \{Z=0\}$. 
Then conditions (a), (b) and (c) of Fact \ref{criterion-outer} are satisfied for the triple $(G_1^d, G_2^d, Q^d)$. 
Let 
$$ K_l:=
\left\{ \left( \begin{array}{ccc}
\zeta_l^i & & \\
& \zeta_l^j & \\
& & 1
\end{array} 
\right) 
\ | \ 0 \le i, j \le l-1
\right\} \subset PGL(3, k) \cong {\rm Aut}(\mathbb{P}^2). $$
If $l$ divides $d$, then $K_l \subset G_1^d G_2^d$. 

\begin{theorem} 
Assume that $d \ge 4$, and that $d$ is not divisible by $p$ and $d-1$ is not a power of $p$. 
Let $H \subset {\rm Aut}(F_d)$ be a subgroup. 
Then $F_d/H$ is a descendant of $F_d$ with respect to $(G_1^d, G_2^d, Q^d)$ if and only if there exists an integer $l$ with $1< l < d$ such that $l$ divides $d$ and $H=K_l$. 
In this case, $F_d/H=F_n$, where $n=d/l$. 
In particular, the following hold. 
\begin{itemize} 
\item[(1)] $F_d$ is not maximal with respect to $(G_1^d, G_2^d, Q^d)$. 
\item[(2)] Any descendant of $F_d$ with respect to $(G_1^d, G_2^d, Q^d)$ is a Fermat curve. 
\item[(3)] $F_d$ is minimal with respect to $(G_1^d, G_2^d, Q^d)$ if and only if $d$ is a prime. 
\end{itemize}
\end{theorem}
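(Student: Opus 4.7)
Under the hypotheses on $d$ and $p$, the automorphism group is $\mathrm{Aut}(F_d) = T \rtimes S_3$, where $T = G_1^d G_2^d \cong (\mathbb{Z}/d\mathbb{Z})^2$ is the diagonal torus and $S_3$ acts by permuting coordinates. The proof of the main equivalence will proceed in two directions, after which the three consequences follow easily.

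For the if direction, given $l \mid d$ with $1 < l < d$, I would verify conditions (d), (e), (f) of Proposition~\ref{main-outer} for $H = K_l$. Since $G_1^d, G_2^d, K_l$ are all diagonal and hence commute, (d) is immediate and $G_i^d K_l$ is an abelian subgroup of $T$ of order $dl$. For (e), the intersection consists of diagonal matrices satisfying both $a^d = b^l = 1$ and $a^l = b^d = 1$, which collapses to $K_l$ because $l \mid d$. Condition (f) is an orbit computation: parameterizing $Q^d = (1:\omega:0)$ with $\omega^d = -1$, each side evaluates to $l^2 \sum_{\eta^d = -1}(1:\eta:0)$. To identify $F_d/K_l$ with $F_{d/l}$, I use that $X^l, Y^l, Z^l$ are $K_l$-invariant, generate $k(F_d)^{K_l}$ (degree count $[k(F_d):k(X^l,Y^l)]=l^2=|K_l|$), and satisfy the Fermat relation of degree $d/l$.

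For the only if direction, suppose $H$ satisfies (d), (e), (f). Condition (d) forces $G_1^d$ and $G_2^d$, hence $T$, to normalize $H$. Set $H_T := H \cap T$. Intersecting condition (e) with $T$ yields $(G_1^d + H_T) \cap (G_2^d + H_T) = H_T$ in the additive notation on $T \cong (\mathbb{Z}/d)^2$; since $G_1^d, G_2^d$ are the two coordinate factors, $H_T$ must split as a product $A \times B$ of subgroups of $\mathbb{Z}/d$. The equality $|G_1^d H/H| = |G_2^d H/H|$ (both equal the degree of the plane embedding by Proposition~\ref{main-outer}) gives $|A| = |B| =: l$; the uniqueness of cyclic subgroups of $\mathbb{Z}/d$ of each order then yields $H_T = K_l$, and condition (d) gives $l < d$.

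The main obstacle is excluding elements of $H$ outside $T$. For $h = (t_h, s_h) \in H$ with $s_h \in S_3 \setminus \{1\}$, the semidirect-product identity $t h t^{-1} = (t_h + (I - s_h)(t),\, s_h)$ for $t \in T$, together with closure of $H$ under multiplication and inversion, yields $(I - s_h)(T) \subset H \cap T = K_l$ after cancelling the $S_3$-components of two $T$-conjugates of $h$. A short case check on each non-identity $s_h \in S_3$, using the explicit action of $S_3$ on $T$, shows that $(I - s_h)(T)$ always contains a point with a coordinate equal to $1$ — for instance $(1,-1)$ for $(12)$, $(2,1)$ for $(13)$, $(1,2)$ for $(23)$, and $(1,-1)$ for the $3$-cycles — which forces the cyclic subgroup $\langle d/l \rangle$ of $\mathbb{Z}/d$ to contain $1$, i.e., $l = d$, contradicting $l < d$. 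Hence $H = K_l$. The three consequences follow: (1) $F_{dm}$ (with $K_m$ for any valid integer $m > 1$) is an ancestor of $F_d$; (2) every descendant is a Fermat curve of the form $F_{d/l}$; and (3) minimality is equivalent to $d$ admitting no divisor $l$ with $1 < l < d$, i.e., to $d$ being prime.
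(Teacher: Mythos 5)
Your proof is correct and follows essentially the same route as the paper: the same verification of (d), (e), (f) for $K_l$ in the ``if'' direction, the same use of ${\rm Aut}(F_d)=K_d\rtimes S$ together with the commutator $tht^{-1}h^{-1}\in H$ to control elements outside the diagonal torus, and the same splitting $H\cap K_d=(G_1^d\cap H)(G_2^d\cap H)$ with equal factors forced by condition (f). The only differences are cosmetic: the paper reaches the contradiction for $s_h\neq 1$ by showing that $G_1^dH$ or $G_2^dH$ would contain $K_d$ (violating condition (e)) rather than by your explicit check that $(I-s_h)(T)\not\subset K_l$ when $l<d$, and the coefficient in your orbit computation for condition (f) should be $l$ rather than $l^2$ (since $|G_1^dK_l|=dl^2/|G_1^d\cap K_l|=dl$), a slip that does not affect the conclusion because the same factor appears on both sides.
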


\begin{proof}
We consider the ``if'' part. 
Let $d=n l$ with $l < d$. 
The set $G_1^d G_2^d$ coincides with the abelian group $K_d \cong \mathbb{Z}/d\mathbb{Z} \oplus \mathbb{Z}/d\mathbb{Z}$. 
We prove that conditions (d), (e) and (f) of Proposition \ref{main-outer} are satisfied for $(G_1^d, G_2^d, Q^d, K_l)$. 
Obviously, $G_i^d \not\subset K_l$ for $i=1, 2$. 
Note that 
\begin{eqnarray*}
G_1^d K_l &=&
\left\{ \left( \begin{array}{ccc}
\zeta_d^i & & \\
& \zeta_{l}^j & \\
& & 1
\end{array} 
\right) 
\ | \ 0 \le i \le d-1, \ 0 \le  j \le l-1
\right\}, \\ 
G_2^d K_l&=&
\left\{ \left( \begin{array}{ccc}
\zeta_l^i & & \\
& \zeta_d^j & \\
& & 1
\end{array} 
\right) 
\ | \ 0 \le i \le l-1, \ 0 \le  j \le d-1
\right\}, 
\end{eqnarray*} 
namely, the set $G_i^d K_l$ is an abelian group and $K_l \vartriangleleft G_i^d K_l$ for $i=1, 2$. 
Condition (d) is satisfied.  
It follows also that $G_i^d K_l \cap G_2^d K_l=K_l$. 
Condition (e) is satisfied. 
Since $G_1^d, G_2^d$ act on $F_d \cap \{Z=0\}$ transitively, it follows that 
$$ \sum_{\sigma \in G_1^d K_l} \sigma(Q^d)=\sum_{R \in F_d \cap \{Z=0\}} l R=\sum_{\tau \in G_2^d K_l}\tau(Q^d). $$
Condition (f) is satisfied. 

We consider the ``only if'' part. 
Let $S \subset {\rm Aut}(F_d)$ be a subgroup of order six generated by automorphisms 
$$ (X:Y:Z) \mapsto (Y: Z: X) \ \mbox{ and} \ (X:Y:Z) \mapsto (Y: X: Z). $$
It is known that ${\rm Aut}(F_d)=(G_1^d G_2^d)S \cong K_d \rtimes S$ (see, for example, \cite[Theorem 11.31]{hkt}). 
Assume that conditions (d), (e) and (f) are satisfied for $(G_1^d, G_2^d, Q^d, H)$. 
Assume that $H \not\subset K_d$. 
Let $h \in H \setminus (H \cap K_d)$. 
Then $h=g s$ for some $g \in K_d$ and $s \in S \setminus \{1\}$. 
Since $H \vartriangleleft G_1^d H$, it follows that $g_1 h g_1^{-1} \in H$ for any $g_1 \in G_1^d$. 
Since $K_d$ is abelian, it follows that 
$$h^{-1}(g_1 h g_1^{-1})=s^{-1} g^{-1}(g_1 g s g_1^{-1})=s^{-1} (g^{-1}g_1 g) s g_1^{-1}=(s^{-1}g_1 s) g_1^{-1} \in H, $$
namely, $s^{-1} g_1 s \in G_1^d H$. 
Similarly, for any $g_2 \in G_2$, $s^{-1} g_2 s \in G_2^d H$. 
It follows that $G_1^d H \supset K_d$ or $G_2^d H \supset K_d$. 
Assume that $G_1^d H \supset K_d$. 
Then $G_1^d H \supset K_d H \supset G_2^d H$, namely, $G_1^d H \cap G_2^d H=G_2^d H$.  
This is a contradiction to condition (e). 
We have $H \subset K_d$. 
Conditions $G_1^d G_2^d=K_d$ and $G_1^d H \cap G_2^d H=H$ imply that $H=(G_1^d\cap H)(G_2^d\cap H)$. 
Since $G_i^d H/H \cong G_i^d/(G_i^d \cap H)$ for $i=1, 2$, it follows that $|G_1^d \cap H|=|G_2^d \cap H|$, by condition (f).  
Let $l=|G_1^d \cap H|=|G_2^d \cap H|$. 
Then $1< l <d$, $l$ divides $d$, and $H=K_l$. 
\end{proof} 

\begin{remark}
Even if $d-1$ is a power of $p$, the following weak assertion can be proved according to the theorem \cite[Corollary 1.4]{fukasawa-speziali} of the author and Speziali: $F_d/H$ is a descendant of $F_d$ with respect to $(G_1^d, G_2^d, Q^d)$ if and only if there exists an integer $n$ with $1< n < d$ such that $n$ divides $d$ and $F_d/H=F_n$.
\end{remark}

When we consider Takahashi's curve $T_m$, we assume that $p=0$, or $2m$ is not divisible by $p$ and $2m-1$ is not a power of $p$. 
It is known that for Takahashi's curve $T_m$, points $P_1=(1:0:0), P_2=(0:1:0) \in \mathbb{P}^2 \setminus T_m$ are Galois points (\cite{fukasawa-speziali}). 
Let 
$$ \sigma(x, y)=(\zeta_m x, y), \ \tau(x, y)=\left(\frac{y^2}{x}, y\right).  $$
Then $G_{P_1} \cong D_{2m}$, and $G_{P_1}G_{P_2}=G_{P_1} \rtimes G_{P_2} \cong D_{2m} \rtimes \mathbb{Z}/(2m)\mathbb{Z}$, where $D_{2m}$ is the dihedral group of order $2m$. 
The Galois group at $P_i$ is denoted by $G_i^m$ for $i=1, 2$. 
Let $Q^m \in T_m \cap \{Z=0\}$. Then conditions (a), (b) and (c) of Fact \ref{criterion-outer} are satisfied for the triple $(G_1^m, G_2^m, Q^m)$. 
Let 
$$ K_l:=
\left\{ \left( \begin{array}{ccc}
\zeta_l^i & & \\
& \zeta_l^j & \\
& & 1
\end{array} 
\right) 
\ | \ 0 \le i, j \le l-1
\right\} \subset PGL(3, k). $$
If $l$ divides $m$, then $K_l \subset G_1^m G_2^m$. 
Condition (d), (e) and (f) are satisfied for $(G_1^m, G_2^m, Q^m, K_m)$, namely, a conic $X^2+X Z+Y^2=0$ is a descendant of $T_m$. 

\begin{theorem} \label{Takahashi's curve} 
Assume that $m \ge 3$, and that $2m$ is not divisible by $p$ and $2m-1$ is not a power of $p$ if $p >0$. 
Let $H \subset {\rm Aut}(T_m)$ be a subgroup.
Then $T_m/H$ is descendant of $T_m$ with respect to $(G_1^m, G_2^m, Q^m)$ if and only if there exists an integer $l$ with $1< l \le m$ such that $l$ divides $m$ and $H=K_l$. 
In this case, $T_m/H=T_n$, where $n=m/l$.   
In particular, the following hold. 
\begin{itemize} 
\item[(1)] $T_m$ is not maximal with respect to $(G_1^m, G_2^m, Q^m)$. 
\item[(2)] Any descendant of $T_m$ with respect to $(G_1^m, G_2^m, Q^m)$ is Takahashi's curve or a conic. 
\item[(3)] $T_m$ does not have a descendant other than a conic if and only if $m$ is a prime. 
\end{itemize}
\end{theorem}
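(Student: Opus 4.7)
The plan is to follow the template of the Fermat-curve theorem proved just above, adapted to the dihedral structure $G_1^m \cong D_{2m}$. For the ``if'' direction, fix a divisor $l$ of $m$ with $1 < l \le m$, and verify conditions (d), (e), (f) of Proposition~\ref{main-outer} for the quadruple $(G_1^m, G_2^m, Q^m, K_l)$. The sets $G_i^m K_l$ admit the explicit descriptions
\[
G_1^m K_l = \{(\zeta_m^a X : \zeta_l^b Y : Z)\} \;\sqcup\; \tau \cdot \{(\zeta_m^a X : \zeta_l^b Y : Z)\}, \qquad G_2^m K_l = \{(\zeta_l^a X : \zeta_{2m}^b Y : Z)\}.
\]
Condition (d) is then immediate: $\tau \in G_1^m$ is non-diagonal while $K_l$ is diagonal, $|G_2^m| = 2m > l$, and a short conjugation using $\tau(X:Y:Z) = (Y^2:XY:XZ)$ shows $\tau K_l \tau^{-1} = K_l$. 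For (e), the non-diagonal coset of $G_1^m K_l$ cannot meet the purely diagonal group $G_2^m K_l$, and matching the diagonal parts yields $K_l$ exactly. For (f), both $G_1^m$ and $G_2^m$ act freely and transitively on the $2m$ points of $T_m \cap \{Z=0\}$, so the orbit-sums under $G_i^m K_l$ of $Q^m$ each equal $l \cdot \sum_{R \in T_m \cap \{Z=0\}} R$.

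For the ``only if'' direction, let $D_{\mathrm{diag}} = \{(\zeta_m^a X : \zeta_{2m}^b Y : Z)\}$ denote the diagonal subgroup of $\mathrm{Aut}(T_m)$, of order $2m^2$; invoking \cite{fukasawa-speziali} (or direct computation), $\mathrm{Aut}(T_m) = D_{\mathrm{diag}} \sqcup \tau D_{\mathrm{diag}}$. Assume $H$ satisfies (d), (e), (f). I first show $H \subset D_{\mathrm{diag}}$: supposing $h = \tau d \in H$ for some $d \in D_{\mathrm{diag}}$, a direct calculation gives $g_2 \tau g_2^{-1} = \sigma^{-1}\tau$ for $g_2 := (X : \zeta_{2m} Y : Z) \in G_2^m$, whence $g_2 h g_2^{-1} \cdot h^{-1} = \sigma^{-1}$; by normality $H \vartriangleleft G_2^m H$, this puts $\sigma^{-1} \in H$. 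Then $G_2^m H$ contains $\langle \sigma, g_2 \rangle = D_{\mathrm{diag}}$ together with the non-diagonal $\tau d$, so $G_2^m H = \mathrm{Aut}(T_m)$. Therefore $G_1^m H \cap G_2^m H = G_1^m H$, which by (e) equals $H$, forcing $G_1^m \subset H$ and contradicting (d). With $H \subset D_{\mathrm{diag}}$ established, parametrise $H$ as a subgroup $\Lambda \subset \mathbb{Z}/m\mathbb{Z} \oplus \mathbb{Z}/(2m)\mathbb{Z}$. The Fermat-style argument then runs: condition (e) forces $\Lambda = \pi_1(\Lambda) \oplus \pi_2(\Lambda)$ (a direct product), condition (f) gives $|\pi_1(\Lambda)| = |\pi_2(\Lambda)| = l$, and the unique subgroup of $\mathbb{Z}/m\mathbb{Z}$ of order $l$ is $\langle m/l\rangle$ (so $l \mid m$), with the corresponding subgroup of $\mathbb{Z}/(2m)\mathbb{Z}$ being $\langle 2m/l\rangle$. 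Hence $H = K_l$.

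The main obstacle is precisely the step $H \subset D_{\mathrm{diag}}$: the dihedral relation $\tau \sigma \tau^{-1} = \sigma^{-1}$ makes the conjugation calculations more delicate than in the Fermat setting, where $\mathrm{Aut}(F_d) = K_d \rtimes S_3$ has a purely diagonal normal subgroup. Once the classification $H = K_l$ is in hand, parts (1)--(3) follow immediately from the identification $T_m/K_l \cong T_n$ with $n = m/l$ (verified by noting that the $K_l$-invariant functions on $T_m$ are generated by $x^l, y^l$, which satisfy the defining equation of $T_n$): taking $l = m$ yields the conic $T_1$ so $T_m$ is never maximal; every descendant is a Takahashi curve $T_n$ with $n \mid m$, reducing to a conic exactly when $n = 1$; and a proper intermediate divisor $1 < l < m$ of $m$ exists if and only if $m$ is composite.
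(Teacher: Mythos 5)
Your core classification argument is correct, and in the ``only if'' direction you take a genuinely different (and arguably more robust) route than the paper at one key step. The paper deduces $H\subset$ (diagonal part) by noting $G_1^m\cap H\vartriangleleft G_1^m$ and appealing to the dihedral structure of $G_1^m$ to conclude $G_1^m\cap H\subset\langle\sigma\rangle$ --- a step that is delicate when $m$ is even, since $D_{2m}$ then has normal subgroups containing reflections. You instead rule out non-diagonal elements of $H$ directly via the commutator computation $g_2hg_2^{-1}h^{-1}=\sigma^{-1}$ for $h=\tau d$, which is exactly the trick the paper deploys in the Fermat case to force $H\subset K_d$; transporting it here cleanly handles the reflections. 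Your subsequent reduction to $\Lambda\subset\mathbb{Z}/m\mathbb{Z}\oplus\mathbb{Z}/(2m)\mathbb{Z}$, with (e) forcing the direct-sum decomposition and (f) forcing $|\pi_1(\Lambda)|=|\pi_2(\Lambda)|=l$, is the same mechanism as the paper's $H=(G_1^m\cap H)(G_2^m\cap H)$ argument (which the paper obtains from Kontogeorgis's ${\rm Aut}(T_m)=G_1^mG_2^m$ together with (e)). The ``if'' direction matches the paper's.

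There is, however, a genuine error in your treatment of part (1). By the paper's definitions, ``$T_m$ is not maximal'' means $T_m$ possesses an \emph{ancestor}, i.e., $T_m\cong Y/H'$ for some curve $Y$ with two Galois points whose data pushes down to $(G_1^m,G_2^m,Q^m)$. Exhibiting the conic $T_m/K_m$ as a descendant shows that $T_m$ is not \emph{minimal}; it says nothing about maximality, so your sentence ``taking $l=m$ yields the conic $T_1$ so $T_m$ is never maximal'' proves the wrong statement. The intended argument for (1) applies the theorem one level up: $T_{ml}/K_l\cong T_m$ with induced triple $(G_1^m,G_2^m,Q^m)$, so $T_{ml}$ is an ancestor of $T_m$ (when $p>0$ one must choose $l$ so that the hypotheses of the theorem hold for $T_{ml}$, e.g., $p\nmid l$ and $2ml-1$ not a power of $p$). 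Parts (2) and (3) as you justify them are fine.
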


\begin{proof} 
We consider the ``if'' part. 
Let $m=n l$ with $l \le m$. 
The set $G_1^{m}G_2^{m}$ is a group isomorphic to $D_{2m} \rtimes \mathbb{Z}/(2m)\mathbb{Z}$. 
The abelian group
$$ K_l=
\left\{ \left( \begin{array}{ccc}
\zeta_l^i & & \\
& \zeta_{l}^j & \\
& & 1
\end{array} 
\right) 
\ | \ 0 \le i, j \le l-1
\right\}. $$
is a normal subgroup of $G_1^m G_2^m$. 
We prove that conditions (d), (e) and (f) of Proposition \ref{main-outer} are satisfied for $(G_1^m, G_2^m, Q^m, K_l)$. 
Obviously, $G_i^m \not\subset K_l$ for $i=1, 2$. 
Since $G_1^m G_2^m=G_1^m \rtimes G_2^m$ and $K_l \vartriangleleft G_1^m G_2^m$, it follows that $G_i^m K_l$ is a group and $K_l \vartriangleleft G_i^m K_l$ for $i=1, 2$. 
Condition (d) is satisfied.  
Since $G_1^m K_l \cap G_2^m K_l$ is contained in an abelian group $G_2^m K_l$, it follows that $G_1^m K_l \cap G_2^m K_l=K_l$. 
Condition (e) is satisfied. 
Since $G_1^m, G_2^m$ act on $T_m \cap \{Z=0\}$ transitively, it follows that 
$$ \sum_{\sigma \in G_1^m K_l} \sigma(Q^m)=\sum_{R \in T_m \cap \{Z=0\}} l R=\sum_{\tau \in G_2^m K_l}\tau(Q^m). $$
Condition (f) is satisfied. 

We consider the ``only if'' part. 
Assume that there exists a subgroup $H \subset {\rm Aut}(T_m)$ such that conditions (d), (e) and (f) are satisfied for $(G_1^m, G_2^m, Q^m, H)$. 
It follows from a theorem of Kontogeorgis \cite{kontogeorgis} that ${\rm Aut}(T_m)=G_1^m G_2^m$. 
Conditions ${\rm Aut}(T_m)=G_1^m G_2^m$ and $G_1^m H \cap G_2^m H=H$ imply that $H=(G_1^m\cap H)(G_2^m\cap H)$. 
Since $G_1^m \cap H \vartriangleleft G_1^m$ and $G_1^m$ is a dihedral group, it follows that $G_1^m \cap H$ is contained in the normal subgroup $\langle \sigma \rangle$ of $G_1^m$ of index two.    
Since $G_i^m H/H \cong G_i^m/(G_i^m \cap H)$ for $i=1, 2$, it follows that $|G_1^m \cap H|=|G_2^m \cap H|$, by condition (f).  
Let $l=|G_1^m \cap H|=|G_2^m \cap H|$. 
Then $1< l \le m$, $l$ divides $m$, and $H=K_l$. 
\end{proof}

We consider another plane model of $T_m$. 
Let $\mu^{2m}=-1$, let 
$$ \sigma'(x, y)=(\zeta_m x, \zeta_m y), \ \tau'(x, y)=\left(\frac{y^2}{x}, \frac{y}{\mu^2} \right),  
$$ 
and let $G_3^m=\langle \sigma', \tau' \rangle$. 
Then $G_3^m$ is a cyclic group of order $2m$. 
Let $R^m$ be a point coming from $T_m \cap \{Y=0\}$. 
Then conditions (a), (b) and (c) of Fact \ref{criterion-outer} are satisfied for the triple $(G_1^m, G_3^m, R^m)$ (see \cite{fukasawa-speziali}). 
By the same method as in the proof of Theorem \ref{Takahashi's curve}, the following holds. 

\begin{theorem} 
Assume that $m \ge 3$, and that $2m$ is not divisible by $p$ and $2m-1$ is not a power of $p$ if $p >0$. 
Let $H \subset {\rm Aut}(T_m)$ be a subgroup. 
Then $T_m/H$ is a descendant of $T_m$ with respect to $(G_1^m, G_3^m, R^m)$ if and only if there exists an integer $l$ with $1< l \le m$ such that $l$ divides $m$ and $H=K_l$.   
In this case, $T_m/H=T_n$, where $n=m/l$. 
In particular, the following hold. 
\begin{itemize} 
\item[(1)] $T_m$ is not maximal with respect to $(G_1^m, G_3^m, R^m)$. 
\item[(2)] Any descendant of $T_m$ with respect to $(G_1^m, G_3^m, R^m)$ is Takahashi's curve or a conic. 
\item[(3)] $T_m$ does not have a descendant other than a conic if and only if $m$ is a prime. 
\end{itemize}
\end{theorem}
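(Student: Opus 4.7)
I would mirror the proof of Theorem~\ref{Takahashi's curve} with $G_3^m$ in place of $G_2^m$, $R^m$ in place of $Q^m$, and $T_m\cap\{Y=0\}$ in place of $T_m\cap\{Z=0\}$; the fact that $K_l\vartriangleleft\mathrm{Aut}(T_m)$ has already been established inside that proof, so I may reuse it. Most of the bookkeeping transfers verbatim and only two places require adaptation.

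For the ``if'' direction, with $l\mid m$ and $1<l\le m$, I verify (d), (e), (f) of Proposition~\ref{main-outer} for $(G_1^m,G_3^m,R^m,K_l)$. Condition (d) is immediate from the non-diagonal elements $\tau\in G_1^m$ and $\tau'\in G_3^m$ together with $K_l\vartriangleleft\mathrm{Aut}(T_m)$. Condition (f) goes through in spirit as before: both $G_1^m$ and $G_3^m$ preserve $T_m\cap\{Y=0\}$ and act transitively on it, so
$$\sum_{\sigma\in G_1^m K_l}\sigma(R^m)=l\sum_{P\in T_m\cap\{Y=0\}}P=\sum_{\tau\in G_3^m K_l}\tau(R^m).$$

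Condition (e) is the place where the previous proof (which relied on $G_2^m K_l$ being abelian) breaks down, since $\tau'$ does not commute with a generic element of $K_l$. I would replace it by a cardinality argument. A direct computation gives $|G_i^m\cap K_l|=l$ (the intersection sits inside the diagonal part $\langle\sigma\rangle$ or $\langle\sigma'\rangle$), hence $|G_i^m K_l|=2ml$. From $G_1^m\cap G_3^m=\{1\}$ and $|\mathrm{Aut}(T_m)|=4m^2$ one obtains $G_1^m G_3^m=\mathrm{Aut}(T_m)$ as a set, so $G_1^m K_l\cdot G_3^m K_l=\mathrm{Aut}(T_m)$; the identity $|AB|=|A||B|/|A\cap B|$ then forces $|G_1^m K_l\cap G_3^m K_l|=(2ml)^2/(4m^2)=l^2=|K_l|$, and combined with the obvious inclusion $K_l\subset G_1^m K_l\cap G_3^m K_l$ this gives equality.

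For the ``only if'' direction, the same cardinality remark upgrades Kontogeorgis's theorem $\mathrm{Aut}(T_m)=G_1^m G_2^m$ to $\mathrm{Aut}(T_m)=G_1^m G_3^m$ as a set, and the factorization argument from Theorem~\ref{Takahashi's curve} then yields $H=(G_1^m\cap H)(G_3^m\cap H)$. Since $G_1^m\cap H\vartriangleleft G_1^m$ and $G_1^m\cong D_{2m}$, the dihedral structure places $G_1^m\cap H$ inside the cyclic index-two subgroup $\langle\sigma\rangle$, and condition (f) forces $|G_1^m\cap H|=|G_3^m\cap H|=:l$ with $l\mid m$. Since $G_3^m$ is cyclic of order $2m$ and $l\mid m$, its unique order-$l$ subgroup is $\langle(\sigma')^{m/l}\rangle$, which together with $G_1^m\cap H=\langle\sigma^{m/l}\rangle$ multiplies out to $K_l$. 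The main obstacle I expect is precisely condition (e) of the ``if'' direction, since the abelian short-cut of Theorem~\ref{Takahashi's curve} is unavailable; the cardinality argument just sketched is the only substantive new input needed.
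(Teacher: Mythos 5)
Your proposal is correct and follows the paper's own route exactly: the paper proves this theorem only by the remark that it goes ``by the same method as in the proof of Theorem \ref{Takahashi's curve}'', and your argument is that method carried out, including the factorization $H=(G_1^m\cap H)(G_3^m\cap H)$ and the identification of the order-$l$ pieces inside $\langle\sigma\rangle$ and $\langle\sigma'\rangle$. Your one genuine addition is the treatment of condition (e) in the ``if'' direction, where the earlier proof's shortcut (that $G_2^mK_l$ is abelian) is indeed unavailable because $G_3^mK_l$ contains the non-diagonal element $\tau'$; your counting argument via $|AB|=|A|\,|B|/|A\cap B|$, $G_1^mG_3^m={\rm Aut}(T_m)$ and $|{\rm Aut}(T_m)|=4m^2$ is a valid replacement and is precisely the adaptation the paper leaves implicit.
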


\begin{problem}
Give an example of a maximal curve. 
In particular, is the curve $y^3+x^4+1=0$ maximal?  
\end{problem}

\end{document}